\documentclass[uplatex,12pt]{article}
\usepackage{amsmath,amssymb,amsthm}

\newtheorem{definition}{Definition}%[section]
\newtheorem{theorem}[definition]{Theorem}

\newtheorem*{definition*}{Definition}
\newtheorem*{theorem*}{Theorem}
\newtheorem*{proposition*}{Proposition}
\newtheorem*{example*}{Example}
\newtheorem*{exercise*}{Exercise}
\newtheorem*{corollary*}{Corollary}
\newtheorem*{remark*}{Remark}

\usepackage{geometry}
\usepackage[shortlabels, inline]{enumitem}
\usepackage{bm}
\usepackage[dvipdfmx]{graphicx}
\usepackage{float}
\allowdisplaybreaks[2]
\usepackage[dvipdfmx]{color}
\usepackage{mathtools}
\mathtoolsset{showonlyrefs=true}
\usepackage{url}

\begin{document}

\begin{center}
  ~\vspace{20pt}
  
  \Large 
  Poisson Hyperbolic Staircase in Discrete Time

  \vspace{20pt}
  
  \large
  Naohiro Yoshida

  \normalsize
  Department of Economics,
  Keiai University

  1-5-21,
  Anagawa, Inage,
  263-8588,
  Chiba,
  Japan

  E-mail:
  \url{n-yoshida@u-keiai.ac.jp}

\end{center}

  \vspace{20pt}

\noindent
  MSC2020:
  60J05    ;
  60E10

\noindent
  Keywords:
  Poisson hyperbolic staircase    ;
  Discrete-time Markov process    ;
  State-dependent transition    ;
  Counting process    ;
  Probability generating function

\begin{abstract}
In this paper, we propose a novel stochastic process that serves as a natural discrete-time counterpart to the continuous-time model known as the ``Poisson hyperbolic staircase'' proposed by Levikson et al. (1999), and clarify its analytical properties. The proposed model is a Markov chain on the state space $(0,1]$. Its transition rule states that at each time step, it jumps downwards to a value less than or equal to the current state according to a continuous uniform distribution with a probability proportional to the current state, and otherwise remains in the same state. In the analysis of the continuous-time model, the scaling property based on the continuity of time and space serves as a powerful tool. However, for this discrete-time process, an essential analytical difficulty arises because this scaling property is inapplicable. To overcome this difficulty, we adopt an approach that directly evaluates recurrence relations and integral equations. First, starting from the conditional transition of this process, we derive closed-form expressions for the marginal distribution and the joint survival function. Next, focusing on the counting process representing the number of jump occurrences and the sum of the state variables, we provide exact closed-form expressions for the probability generating function and the Laplace transform. Furthermore, we clarify the necessary and sufficient conditions that a sequence of functions must satisfy to construct a martingale associated with this process, and present a concrete sequence of martingales.
\end{abstract}

\newpage
\section{Introduction}

Stochastic processes with state-dependent transition probabilities (or transition rates) play a crucial role in various fields of applied probability, such as modeling system failures in reliability engineering, extreme value theory, and the mathematics of record values. For instance, \cite{barlow1975statistical} discussed processes with various transition characteristics in the context of reliability theory, and extensive research on the statistical properties of record values has been conducted by \cite{arnold1998records}.

In this context, an interesting object among continuous-time models is the ``Poisson hyperbolic staircase'' proposed by \cite{levikson1999poisson}. This process behaves such that when it is at a certain height $y$, it remains in that state for a time distributed according to an exponential distribution with parameter $y$, and then jumps (descends) to a new height according to a uniform distribution on the interval $(0, y)$. This structure also has deep connections with the extremal processes discussed by \cite{dwass1964extremal}.

The purpose of this paper is to construct a natural ``discrete-time model'' of this Poisson hyperbolic staircase and strictly clarify its stochastic properties.
Specifically, assuming $0 < p \leq 1$ and starting from the initial value $X_0=1$, we consider a Markov chain $\{X_n\}_{n=0}^\infty$ where, given the state $X_{n-1}$ at time $n-1$, it transitions to a value according to a continuous uniform distribution on the interval $(0, X_{n-1})$ with probability $p X_{n-1}$, and remains in the original state $X_{n-1}$ with probability $1 - p X_{n-1}$. The time evolution of the model is described as follows:
\begin{align}
    X_0=1,\quad X_n = 
    \begin{cases}
        X_{n-1} Z_n & \mbox{with probability } p X_{n-1},  \\
        X_{n-1} & \mbox{with probability } 1 - p X_{n-1},
    \end{cases}
    \quad n=1,2,\dots ,
\end{align}
where $Z_n \sim U(0,1)$ for $n=1,2,\dots$ are independent and identically distributed random variables.
The property of a ``jump rate proportional to height'' in the continuous-time model is translated to a ``jump probability proportional to height'' in the discrete-time model.

However, there is an essential difference in the analytical methods compared to the continuous-time model. In the continuous-time model, a ``scaling property'' due to the continuity of time and space holds, which serves as a powerful weapon to elegantly derive various expectations and distributions. On the other hand, in this discretized model, it is not possible to continuously scale time, meaning the scaling property cannot be applied directly, leading to analytical difficulties. Therefore, in this paper, without relying on the scaling property, we adopt a direct analytical approach using integral equations and generating functions.

The structure of this paper is as follows. In Section 2, we derive the marginal distribution and the joint survival function of the process $\{X_n\}$. In Section 3, we treat the total number of jumps $N_n$ as a counting process and provide a closed-form expression for its probability generating function. Finally, in Section 4, we clarify the necessary and sufficient conditions for constructing martingales associated with this process.

\section{Probability distributions}

Let $\xi_i,~i=1,2,\dots$ be the indicator random variable representing whether a jump occurs at time $i$, i.e., $\xi_i = 1$ if a jump occurs (or if $X_i < X_{i-1}$) and $\xi_i = 0$ otherwise (or if $X_i = X_{i-1}$).
First, we investigate the marginal distribution of $\{X_n\}$.
The marginal distribution can be obtained by conditioning on the state at the previous time step.

\begin{theorem}
For any positive integer $n$ and $0<x\leq 1$,
\begin{align}
  &P(X_n=1)=(1-p)^n, 
  \\
  &P(X_n\leq x)=1-(1-px)^n, \quad \text{for } 0<x<1.
\end{align}
\end{theorem}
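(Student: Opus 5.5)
The plan is induction on $n$, conditioning on $X_{n-1}$ exactly as the transition rule suggests. The atom at $1$ is immediate. The process stays at $1$ at step $n$ only if $X_{n-1}=1$ and no jump occurs, which happens with probability $1-p\cdot 1$. Moreover, once $X$ leaves $1$ it can never return. Hence $P(X_n=1)=(1-p)P(X_{n-1}=1)$, and with $P(X_0=1)=1$ this gives $(1-p)^n$.

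For the continuous part I would work with the survival function on $(0,1)$, $G_n(x):=P(X_n> x)$, and aim to show $G_n(x)=(1-px)^n$. The base case is $G_0(x)=1$. Condition on $X_{n-1}=y$:
\begin{itemize}
\item if $y\le x$, then $X_n\le y\le x$ surely;
\item if $y>x$, then $X_n>x$ either when there is no jump, with probability $1-py$, or when there is a jump and $yZ_n>x$, with probability $py\cdot(1-x/y)=p(y-x)$.
\end{itemize}
Summing, $P(X_n>x\mid X_{n-1}=y)=(1-px)\mathbf{1}\{y>x\}$. The jump term is exactly what cancels the $y$-dependence. Therefore
\begin{align}
G_n(x)=(1-px)\,P(X_{n-1}>x)=(1-px)\,G_{n-1}(x),
\end{align}
and induction gives $G_n(x)=(1-px)^n$, i.e.\ $P(X_n\le x)=1-(1-px)^n$ for $0<x<1$. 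The event $\{X_{n-1}>x\}$ includes the atom at $1$, and the conditional computation is valid there as well with $y=1$, so the atom needs no separate treatment.

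The only step needing care is justifying the conditioning for the mixed distribution of $X_{n-1}$ (atom plus density). The tower property handles it, since the conditional probability above holds pointwise for every $y\in(0,1]$. The cancellation $1-py+p(y-x)=1-px$ is the key identity, and verifying it is the main, though short, obstacle. As a consistency check, letting $x\uparrow1$ gives $P(X_n<1)=1-(1-p)^n$, which matches the atom.
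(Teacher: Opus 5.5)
Your proof is correct and follows the paper's route: condition on $X_{n-1}$, use that the one-step conditional probability of $\{X_n\le x\}$ is $px$ on $\{X_{n-1}>x\}$ and $1$ otherwise, and solve the resulting recurrence. Working with the survival function just turns the paper's affine recurrence $a_n=px+(1-px)a_{n-1}$ into the homogeneous $G_n=(1-px)G_{n-1}$. You also justify the atom at $1$, which the paper only calls clear.
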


\begin{proof}
First, note that the following conditional distribution is obtained:
\begin{align}
  P(X_n\leq x|X_{n-1})=\begin{cases}
    1, & \text{for } x\geq X_{n-1},
    \\
    px, & \text{for } 0< x< X_{n-1},
    \\
    0, & \text{for } x\leq 0.
  \end{cases}
  \label{eq:condi}
\end{align}
Indeed, for $0<x<X_{n-1}$,
\begin{align}
  P(X_n\leq x|X_{n-1})&=P(\xi_n=1 \cap Z_{n}X_{n-1}\leq x|X_{n-1})
  \\
  &=P(\xi_n=1|X_{n-1})P\left(Z_n\leq \frac{x}{X_{n-1}}\middle|X_{n-1}\right)
  \\
  &=pX_{n-1}\cdot \frac{x}{X_{n-1}}
  \\
  &=px.
\end{align}
Using this conditional distribution, the unconditional distribution can be calculated as follows.
For $0<x<1$,
\begin{align}
  P(X_n\leq x)&=E[P(X_n\leq x|X_{n-1})]
  \\
  &=1\cdot P(X_{n-1}\leq x)+pxP(X_{n-1}>x)
  \\
  &=P(X_{n-1}\leq x)+px(1-P(X_{n-1}\leq x))
  \\
  &=px+(1-px)P(X_{n-1}\leq x).
\end{align}
Let $a_n=P(X_n\leq x)$.
Noting that $a_0=P(X_0\leq x)=P(1\leq x)=0$, solving the recurrence relation
$
  a_n=px+(1-px)a_{n-1}
$
yields
\begin{align}
  a_n&=px+(1-px)px+(1-px)^2px+\dots+(1-px)^{n-1}px
  \\
  &=\frac{px-(1-px)^npx}{1-(1-px)}
  % \\
  % &=\frac{px-(1-px)^npx}{px}
  \\
  &=1-(1-px)^n.
\end{align}
Also, for $x=1$, it is clear that $P(X_n=1)=(1-p)^n$.
The assertion follows from the above.
\end{proof}

\begin{theorem}
For any positive integer $n$ and $0\leq x_i<1~(i=1,2,\dots,n)$,
\begin{align}
  P(X_1>x_1\cap X_2>x_2\cap \dots\cap X_n>x_n)
  =\prod_{i=1}^n \left(1-p\cdot \max(x_{n-i+1},x_{n-i+2},\dots,x_n) \right).
\end{align}
\end{theorem}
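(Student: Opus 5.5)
We will prove the statement by induction on $n$, conditioning on the \emph{first} step. The key tool is the one-step conditional distribution \eqref{eq:condi} from the proof of the previous theorem, together with a scaling identity: started from $X_0=y$ instead of $1$, the chain has the law of $y$ times a chain in which the jump probability from state $u$ is $p y u$. Rather than tracking such rescaled chains, it is cleaner to prove a more general statement by induction. For a starting point $X_0=y\in(0,1]$ and thresholds $x_1,\dots,x_n\in[0,1)$ with $M_k:=\max(x_k,\dots,x_n)$, we claim
\begin{align}
  P_y(X_1>x_1\cap\dots\cap X_n>x_n)=\mathbf 1\{y>M_1\}\prod_{k=1}^n(1-pM_k).
\end{align}
At $y=1$, with $M_1<1$, this is exactly the asserted formula after reindexing $i\mapsto k=n-i+1$.

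Why the indicator appears: the chain is nonincreasing. So the event $\bigcap_k\{X_k>x_k\}$ is equivalent to $X_k>M_k$ for all $k$, since $X_k\ge X_j>x_j$ for $j\ge k$. In particular it requires $y>M_1$.

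For the base case $n=1$ with $y>x_1$, \eqref{eq:condi} gives $P_y(X_1>x_1)=1-px_1$, independent of $y$.

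For the inductive step, condition on $X_1$ and use the Markov property. This gives
\begin{align}
  P_y(\cdots)=E_y\bigl[\mathbf 1\{X_1>x_1\}\,P_{X_1}(X_1'>x_2,\dots,X_{n-1}'>x_n)\bigr].
\end{align}
By the inductive hypothesis, the inner probability equals $\mathbf 1\{X_1>M_2\}\prod_{k=2}^n(1-pM_k)$. That constant product factors out. What remains is $P_y(X_1>\max(x_1,M_2))=P_y(X_1>M_1)$, which by the base case equals $\mathbf 1\{y>M_1\}(1-pM_1)$. This closes the induction.

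The point that needs care is that the conditional law \eqref{eq:condi} of the next step above a threshold $x<X_{n-1}$ does not depend on the current state. This is the crucial memoryless feature: the factor $pX_{n-1}\cdot x/X_{n-1}$ cancels to $px$. Because of it, the inner probability depends on $X_1$ only through the indicator $\{X_1>M_2\}$, and no integral over the density of $X_1$ is ever needed. The other point to check is the edge cases: thresholds equal to $0$, and the atom at the current state when $x\ge X_{n-1}$. Both are handled by the indicator in the generalized claim, and the hypothesis $x_i<1$ is what ensures the indicator equals $1$ at $y=1$.
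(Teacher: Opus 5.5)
Your proof is correct, but it runs the induction in the opposite direction from the paper.

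You condition on the \emph{first} step. This forces you to strengthen the claim to an arbitrary starting state $y\in(0,1]$, with the indicator $\mathbf 1\{y>M_1\}$. It also requires the Markov property for the chain restarted at $X_1$.

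The paper conditions on the \emph{last} step. It computes $E[I(X_n>x_n)\mid X_1,\dots,X_{n-1}]=I(X_{n-1}>x_n)(1-px_n)$ and absorbs the indicator into the preceding constraint. This gives $P(X_1>x_1,\dots,X_{n-2}>x_{n-2},X_{n-1}>\max(x_{n-1},x_n))(1-px_n)$, and the step is then repeated. Because the thresholds in the statement are already arbitrary, this backward peeling closes without any strengthening, and the chain always starts from $X_0=1$. It also accounts for the paper's indexing by $\max(x_{n-i+1},\dots,x_n)$: the factors come out from the last time backward.

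Your forward version needs a slightly more elaborate induction hypothesis. In return it gives a more general statement: the joint survival function from any initial state, including the vanishing case $y\le M_1$. It also matches the first-step analysis the paper itself uses later for $G_n$ and $W_n$.

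Both arguments rest on the same two facts: the cancellation $pX_{n-1}\cdot x/X_{n-1}=px$, and the monotonicity of the chain.
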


\begin{proof}
Let $I(A)$ denote the indicator function of an event $A$; that is, $I(A)=1$ if $A$ occurs, and $I(A)=0$ if $A$ does not occur.
From \eqref{eq:condi}, note that
$$E[I(X_n>x_n)|X_1,\dots,X_{n-1}]
 =I(X_{n-1}>x_n)P(X_n>x_n|X_{n-1})=I(X_{n-1}>x_n)(1-px_n).$$
Then, by the tower property of conditional expectation, we obtain the relationship:
\begin{align}
  &P(X_1>x_1\cap X_2>x_2\cap \dots\cap X_n>x_n)
  \\
  &=E[I(X_1>x_1\cap \dots\cap X_{n-1}>x_{n-1})E[I(X_n>x_n)|X_1,\dots,X_{n-1}]]
 \\
 &=E[I(X_1>x_1\cap \dots\cap X_{n-1}>x_{n-1})I(X_{n-1}>x_n)](1-px_n)
 \\
 &=P(X_1>x_1\cap\dots\cap X_{n-2}>x_{n-2}\cap X_{n-1}>\max\{x_{n-1},x_n\})(1-px_n).
\end{align}
Repeating this process yields the assertion of the theorem.
\end{proof}

\section{Counting process}

We define the counting process as
$N_0=0,~N_n=\xi_1+\xi_2+\dots+\xi_n~(n\geq 1)$.
The probability distribution of $N_n$ can be obtained by finding the generating function of the probability generating functions.
For the general mathematics of such counting processes and the basics of analytical methods using generating functions and Laplace transforms, refer to standard texts such as Daley and Vere-Jones (1988) \cite{daley1988introduction}.

\begin{theorem}
For any positive integer $n$ and $k=0,1,2,\dots,n$, the probability generating function of $N_n$ for $-1\leq z\leq 1$ is
\begin{align}
  E[z^{N_n}]
  =\sum_{j=0}^n \binom{n}{j} \binom{z-1}{j} p^j 
  = \sum_{k=0}^n \left( \sum_{j=k}^n \binom{n}{j} p^j c_{j,k} \right) z^k
  \label{eq:pgf}
\end{align}
where $c_{j,k}$ is the coefficient of $z^k$ in the expansion of $\binom{z-1}{j}$, i.e., $\binom{z-1}{j} = \sum_{k=0}^j c_{j,k} z^k$.
Therefore, the probability mass function $P(N_n=k),~k=0,1,2,\dots,n$ is given by
\begin{align}
    P(N_n=k) = \sum_{j=k}^n \binom{n}{j} p^j c_{j,k}.
\end{align}
\end{theorem}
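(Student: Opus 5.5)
The plan is to condition on the starting height and set up a one-step recurrence for the generating function. The paper defines the chain only from $X_0=1$, so I first extend it: for $0<y\leq 1$, let the chain start at $X_0=y$ and evolve by the same rule, and define
\begin{align}
  g_n(y)=E\left[z^{N_n}\,\middle|\,X_0=y\right],\qquad g_0(y)=1.
\end{align}

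\textbf{First-step recurrence.} Condition on the first step. With probability $1-py$ there is no jump, $N$ does not increase, and the chain restarts at $y$. With probability $py$ there is a jump, which contributes a factor $z$, and the new state has density $1/y$ on $(0,y)$. By the Markov property,
\begin{align}
  g_n(y)=(1-py)\,g_{n-1}(y)+pz\int_0^y g_{n-1}(u)\,du .
\end{align}
The factor $py\cdot\frac{1}{y}$ collapses to $p$ here. This is the same cancellation that gave $P(X_n\leq x\mid X_{n-1})=px$ in the proof of Theorem 1. We then want $g_n(1)$.

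\textbf{Polynomial ansatz.} Since $g_0\equiv 1$ and the recurrence maps polynomials in $y$ to polynomials, I posit $g_n(y)=\sum_{j=0}^n a_{n,j}(py)^j$. Using $\int_0^y (pu)^j\,du=(py)^{j+1}/(p(j+1))$ and comparing coefficients of $(py)^j$ gives
\begin{align}
  a_{n,j}=a_{n-1,j}-a_{n-1,j-1}+\frac{z}{j}\,a_{n-1,j-1}
  =a_{n-1,j}+\frac{z-j}{j}\,a_{n-1,j-1},
\end{align}
with $a_{0,0}=1$ and $a_{n,j}=0$ whenever $j<0$ or $j>n$.

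\textbf{Verifying the closed form.} I claim $a_{n,j}=\binom{n}{j}\binom{z-1}{j}$, and I verify it by induction on $n$. The generalized binomial satisfies $\binom{z-1}{j}=\frac{z-j}{j}\binom{z-1}{j-1}$. So the right-hand side of the recurrence becomes
\begin{align}
  \binom{z-1}{j}\left[\binom{n-1}{j}+\binom{n-1}{j-1}\right],
\end{align}
and Pascal's rule turns this into $\binom{z-1}{j}\binom{n}{j}$. Setting $y=1$ then gives the first equality in \eqref{eq:pgf}.

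\textbf{Remaining expression and main obstacle.} For the second expression, I substitute $\binom{z-1}{j}=\sum_{k=0}^j c_{j,k}z^k$ and interchange the two finite sums. Since $E[z^{N_n}]=\sum_k P(N_n=k)z^k$ is a polynomial in $z$, agreement on $[-1,1]$ forces equality of coefficients, which yields the formula for $P(N_n=k)$. I expect the main obstacle to be justifying the polynomial ansatz rigorously. The induction handles this: if $g_{n-1}$ is this polynomial, then the recurrence defines $g_n$ uniquely and produces exactly the claimed polynomial. The only care needed is the conditioning on a general starting point $y$, which is legitimate because the transition rule depends only on the current state.
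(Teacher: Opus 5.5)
Your proof is correct. It shares the paper's starting point: the same first-step recurrence $g_n(y)=(1-py)g_{n-1}(y)+pz\int_0^y g_{n-1}(u)\,du$ for the chain started at a general height. Where you differ is in solving it.

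The paper passes to the time generating function $F(x,y)=\sum_{n\ge 0}G_n(x)y^n$ and converts the recurrence into an integral equation. It then differentiates to a separable ODE, obtains $F(x,y)=(1-y)^{-z}(1-y+pxy)^{z-1}$, and extracts the coefficient of $y^n$ using the binomial and negative binomial series. You instead observe that the recurrence preserves polynomials in $y$ and check the coefficients $\binom{n}{j}\binom{z-1}{j}$ by induction on $n$. The inductive step is exactly the ratio identity $\binom{z-1}{j}=\frac{z-j}{j}\binom{z-1}{j-1}$ followed by Pascal's rule.

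Your argument is finite and purely algebraic. It therefore avoids the analytic steps the paper passes over quickly: convergence of the series in $y$, taking $\ln F$, and the binomial expansion of $\bigl(1+\frac{pxy}{1-y}\bigr)^{z-1}$, which needs $|pxy/(1-y)|<1$. Your explicit coefficient-comparison argument for $P(N_n=k)$ is also more careful than the paper's.

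What the paper's route buys is threefold. It derives the formula rather than verifying one supplied in advance. It yields the closed-form bivariate generating function as a by-product. And it is the template reused for the Laplace transform of $\sum_{i=1}^n X_i$, where the factors $e^{-tx}$ destroy the polynomial structure your induction relies on.

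One small bookkeeping fix: state your coefficient recurrence only for $j\ge 1$ and handle $j=0$ separately ($a_{n,0}=a_{n-1,0}=1$). At $j=0$ the term $\frac{z}{j}a_{n-1,j-1}$ is formally $\frac{z}{0}\cdot 0$.
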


\begin{proof}
Conditioning on the realization at time $n=1$, for $0<x\leq 1$ we obtain the relationship:
  \begin{align}
    E[z^{N_n}|X_0=x]
    &=pxE[z^{N_n}|\xi_1=1,X_0=x]+(1-px)E[z^{N_n}|\xi_1=0,X_0=x]
    \\
    &=px\int_0^1E[z^{N_{n}}|\xi_1=1,Z_1=t,X_0=x]dt+(1-px)E[z^{N_{n-1}}|X_0=x]
    \\
    &=pxz\int_0^1E[z^{N_{n-1}}|X_0=xt]dt+(1-px)E[z^{N_{n-1}}|X_0=x].
  \end{align}
Therefore, defining $G_n(x)=E[z^{N_n}|X_0=x]$, $G_n(x)$ satisfies the following integral equation:
\begin{align}
    G_n(x) &= p x z \int_0^1 G_{n-1}(xt) \, dt + (1-px) G_{n-1}(x) \label{eq:recurrence}
\end{align}
with the initial condition $G_0(x) = 1,~\forall x$.
Substituting $u = xt$ in the integral part yields
\begin{align}
    G_n(x) = p z \int_0^x G_{n-1}(u) \, du + (1-px) G_{n-1}(x).
\end{align}
Here, we introduce the bivariate generating function $F(x, y) = \sum_{n=0}^\infty G_n(x) y^n$ for the sequence $\{G_n(x)\}_{n=0}^\infty$. Multiplying both sides of the recurrence relation by $y^n$ and summing from $n=1$ to infinity gives
\begin{align}
    F(x, y) - 1 = p z y \int_0^x F(u, y) \, du + y(1-px) F(x, y).
\end{align}
Rearranging this, we obtain the following integral equation:
\begin{align}
    (1 - y + pxy) F(x, y) = 1 + p z y \int_0^x F(u, y) \, du.
\end{align}
Differentiating both sides with respect to $x$ yields
\begin{align}
    p y F(x, y) + (1 - y + pxy) \frac{\partial F(x, y)}{\partial x} = p z y F(x, y)
\end{align}
\begin{align}
    (1 - y + pxy) \frac{\partial F(x, y)}{\partial x} = p y (z-1) F(x, y).
\end{align}
Thus, we obtain a separable first-order differential equation. Solving this yields
\begin{align}
    \frac{1}{F(x, y)} \frac{\partial F(x, y)}{\partial x} &= \frac{py(z-1)}{1 - y + pxy} \\
    \ln F(x, y) &= (z-1) \ln(1 - y + pxy) + C(y) \\
    F(x, y) &= C_1(y) (1 - y + pxy)^{z-1}.
\end{align}
From the original integral equation, when $x=0$, we have $(1-y)F(0, y) = 1$, i.e., $F(0, y) = (1-y)^{-1}$. Hence,
$
    C_1(y) (1-y)^{z-1} = (1-y)^{-1}, 
$
which means $ C_1(y) = (1-y)^{-z} $.
Therefore, the closed form of the generating function is as follows:
\begin{align}
    F(x, y) &= (1-y)^{-z} (1 - y + pxy)^{z-1} \\
    &= \frac{1}{1-y} \left( 1 + \frac{pxy}{1-y} \right)^{z-1}.
\end{align}
We expand this with respect to $y$ and extract the coefficient of $y^n$. First, by the negative binomial theorem,
\begin{align}
    F(x, y) &= \frac{1}{1-y} \sum_{k=0}^\infty \binom{z-1}{k} \left( \frac{pxy}{1-y} \right)^k \\
    &= \sum_{k=0}^\infty \binom{z-1}{k} p^k x^k \frac{y^k}{(1-y)^{k+1}}.
\end{align}
Furthermore, since $\frac{y^k}{(1-y)^{k+1}} =\sum_{m=0}^\infty \binom{m+k}{k}y^{m+k}= \sum_{n=k}^\infty \binom{n}{k} y^n$ by the negative binomial theorem,
\begin{align}
    F(x, y) &= \sum_{k=0}^\infty \binom{z-1}{k} p^k x^k \sum_{n=k}^\infty \binom{n}{k} y^n \\
    &= \sum_{n=0}^\infty \left( \sum_{k=0}^n \binom{n}{k} \binom{z-1}{k} p^k x^k \right) y^n.
\end{align}
By definition, since $G_n(x)$ is the coefficient of $y^n$, we obtain
$$G_n(x)=\sum_{k=0}^n \binom{n}{k} \binom{z-1}{k} p^k x^k.$$
Since $E[z^{N_n}]=E[z^{N_n}|X_0=1]=G_n(1)$, we find
$$E[z^{N_n}]=\sum_{j=0}^n \binom{n}{j} \binom{z-1}{j} p^j.$$
Furthermore, substituting $\binom{z-1}{j} = \sum_{k=0}^j c_{j,k} z^k$ into this equation gives
\begin{align}
    E[z^{N_n}] &= \sum_{j=0}^n \binom{n}{j} p^j \sum_{k=0}^j c_{j,k} z^k \nonumber \\
    &= \sum_{k=0}^n \left( \sum_{j=k}^n \binom{n}{j} p^j c_{j,k} \right) z^k
\end{align}
yielding the desired result.
\end{proof}

Consider the sum of the process up to time $n$, i.e., $\sum_{i=1}^n X_i$.
To investigate its probability generating function, we determine the generating function of the probability generating functions.

\begin{theorem}
For $t\geq 0$, the generating function of
$
  E[e^{-t\sum_{i=1}^n X_i}]
$
is given by
\begin{align}
    \sum_{n=0}^\infty E[e^{-t\sum_{i=1}^n X_i}]z^n 
    = \frac{1}{1 - z(1-p)e^{-t}} \exp\left( \int_0^1 \frac{p z e^{-ts}}{1 - z(1-ps)e^{-ts}} \, ds \right).
\end{align}
\end{theorem}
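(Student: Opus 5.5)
We follow the strategy of the proof of the probability generating function of $N_n$: condition on the first step, obtain an integral recurrence in the starting point, pass to a generating function in $n$, and solve the resulting linear ODE.

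For $0<x\le 1$ let $H_n(x)=E[e^{-t\sum_{i=1}^n X_i}\mid X_0=x]$, with $H_0(x)=1$. At time $1$ there are two cases. With probability $1-px$ there is no jump, so $X_1=x$. With probability $px$ the chain jumps to $X_1=xs$ with $s\sim U(0,1)$. The Markov property then gives
\begin{align}
H_n(x)=(1-px)e^{-tx}H_{n-1}(x)+px\int_0^1 e^{-txs}H_{n-1}(xs)\,ds .
\end{align}
Substituting $u=xs$ turns this into
\begin{align}
H_n(x)=(1-px)e^{-tx}H_{n-1}(x)+p\int_0^x e^{-tu}H_{n-1}(u)\,du .
\end{align}

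Next set $F(x,z)=\sum_{n\ge0}H_n(x)z^n$. Multiplying the recurrence by $z^n$ and summing over $n\ge1$ gives
\begin{align}
\bigl(1-z(1-px)e^{-tx}\bigr)F(x,z)=1+pz\int_0^x e^{-tu}F(u,z)\,du .
\end{align}
Let $D(x)=1-z(1-px)e^{-tx}$ and $W(x)=1+pz\int_0^x e^{-tu}F\,du$, so that $F=W/D$. Differentiating the definition of $W$ gives the linear ODE
\begin{align}
W'(x)=\frac{pz e^{-tx}}{D(x)}\,W(x),
\end{align}
and at $x=0$ we have $W(0)=1$. Hence
\begin{align}
W(x)=\exp\Bigl(\int_0^x \frac{pze^{-ts}}{1-z(1-ps)e^{-ts}}\,ds\Bigr).
\end{align}
Evaluating at $x=1$, where $D(1)=1-z(1-p)e^{-t}$, gives exactly the claimed expression for $F(1,z)=\sum_n E[e^{-t\sum_{i=1}^n X_i}]z^n$.

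This ODE is nicer than the one for $N_n$. In that proof one differentiated $(1-y+pxy)F$ directly, and here that route would create a messy $D'$ term. Working with $W$ instead makes the equation immediately separable, and that is the step where the route matters.

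The main obstacle is justification rather than algebra. The interchange of sum and integral, and the term-by-term differentiation, need the power series to converge. For $t\ge0$ we have $0\le H_n\le1$, so the series converges uniformly for $|z|<1$. Also $|z(1-ps)e^{-ts}|<1$ on $[0,1]$ when $|z|<1$, so $D$ never vanishes and the logarithm/exponential is well-defined. Finally, $W$ is differentiable because $u\mapsto e^{-tu}F(u,z)$ is continuous: each $H_n$ is continuous in $x$, which follows by induction from the recurrence, and the series converges uniformly. I would state the identity for $|z|<1$ as a formal or analytic power series identity.
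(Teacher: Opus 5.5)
Your proof is correct and is essentially the paper's argument: the same first-step recurrence, the same integral equation for the generating function, and the same first-order linear ODE in $x$. The only difference is that you solve for $W = DF$, whereas the paper differentiates $DF$ directly; there the $D'$ term is harmless, since it simply integrates to $-\ln D$. Your route also avoids taking $\ln F$, and your convergence remarks for $|z|<1$ supply justification the paper omits.
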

\begin{proof}
  Let
  \begin{align}
  W_n(x) = E\left[e^{-t \sum_{i=1}^n X_i} \middle| X_0 = x\right].
\end{align}
Conditioning on the realization of the first jump, we have
\begin{align}
  W_n(x) &= E\left[ e^{-t X_1} W_{n-1}(X_1) \middle| X_0 = x \right]
  \\
  &= (1-px) e^{-tx} W_{n-1}(x) + px \int_0^1 e^{-t(ux)} W_{n-1}(ux) du
  \\
  &= (1-px) e^{-tx} W_{n-1}(x) + p \int_0^x e^{-ty} W_{n-1}(y) dy.
  \label{eq:W_recurrence}
\end{align}
Let $H(x, z) = \sum_{n=0}^\infty W_n(x) z^n$ be the generating function for the sequence $\{W_n(x)\}_{n=0}^\infty$.
Multiplying both sides of Eq. \eqref{eq:W_recurrence} by $z^n$ and summing from $n=1$ to $\infty$, the left-hand side becomes $H(x, z) - W_0(x) = H(x, z) - 1$, yielding
\begin{align}
    H(x, z) - 1 = z(1-px)e^{-tx} H(x, z) + p z \int_0^x e^{-ty} H(y, z) \, dy.
\end{align}
Rearranging this gives
\begin{align}
    \left( 1 - z(1-px)e^{-tx} \right) H(x, z) = 1 + p z \int_0^x e^{-ty} H(y, z) \, dy. \label{eq:integral_H}
\end{align}
Let $D(x) = 1 - z(1-px)e^{-tx}$. Differentiating both sides of Eq. \eqref{eq:integral_H} with respect to $x$ yields
\begin{align}
    D'(x) H(x, z) + D(x) \frac{\partial H(x, z)}{\partial x} = p z e^{-tx} H(x, z).
\end{align}
We transform this into a separable form:
\begin{align}
    \frac{1}{H(x, z)} \frac{\partial H(x, z)}{\partial x} &= \frac{p z e^{-tx} - D'(x)}{D(x)} 
    \\
    &= \frac{p z e^{-tx}}{1 - z(1-px)e^{-tx}} - \frac{D'(x)}{D(x)}.
\end{align}
Noting from Eq. \eqref{eq:integral_H} that the initial value is $H(0, z) = \frac{1}{1-z}$ and that $D(0) = 1-z$, we obtain
\begin{align}
    \ln H(x, z) - \ln\left(\frac{1}{1-z}\right) &= \int_0^x \frac{p z e^{-ts}}{1 - z(1-ps)e^{-ts}} \, ds - \left( \ln D(x) - \ln(1-z) \right) \nonumber \\
    \ln H(x, z) &= - \ln D(x) + \int_0^x \frac{p z e^{-ts}}{1 - z(1-ps)e^{-ts}} \, ds.
\end{align}
Thus, we find
\begin{align}
    H(x, z) = \frac{1}{1 - z(1-px)e^{-tx}} \exp\left( \int_0^x \frac{p z e^{-ts}}{1 - z(1-ps)e^{-ts}} \, ds \right).
\end{align}
The assertion of the theorem corresponds to the case when $x=1$.
\end{proof}

We define
\begin{align}
    c_k(x) = p \int_0^x e^{-tks} (1-ps)^{k-1} \, ds \quad (k \geq 1)
\end{align}
and determine $V_i(x)$ by the power series $\exp\left( \sum_{k=1}^\infty c_k(x) z^k \right) = \sum_{i=0}^\infty V_i(x) z^i$ (with $V_0(x)=1$). Then, $W_n(x)$ is given by the following equation:
\begin{align}
    W_n(x) = \sum_{i=0}^n (1-px)^{n-i} e^{-t(n-i)x} V_i(x).
\end{align}
Indeed, this can be verified as follows.
We expand the generating function $H(x, z)$ by separating it into a fractional part and an exponential part.
For the fractional part, from the expansion of a geometric series,
\begin{align}
    \frac{1}{1 - z(1-px)e^{-tx}} = \sum_{j=0}^\infty z^j (1-px)^j e^{-tjx}.
\end{align}
For the integral term inside the exponential function, expanding the denominator of the integrand and performing term-by-term integration yields
\begin{align}
    \int_0^x \frac{p z e^{-ts}}{1 - z(1-ps)e^{-ts}} \, ds &= \int_0^x \sum_{j=0}^\infty p z^{j+1} e^{-t(j+1)s} (1-ps)^j \, ds \nonumber \\
    &= \sum_{k=1}^\infty \left( p \int_0^x e^{-tks} (1-ps)^{k-1} \, ds \right) z^k = \sum_{k=1}^\infty c_k(x) z^k.
\end{align}
Therefore, the generating function can be written as the product of two series as follows:
\begin{align}
    H(x, z) = \left( \sum_{j=0}^\infty z^j (1-px)^j e^{-tjx} \right) \left( \sum_{i=0}^\infty V_i(x) z^i \right).
\end{align}
By comparing the coefficients of $z^n$ on the right-hand side (Cauchy product), we obtain the assertion.

\section{Martingale}

In this section, we clarify the conditions for constructing a martingale associated with the proposed discrete-time process $\{X_n\}$. For the basic theory of discrete-time martingales, refer to \cite{williams1991probability}.

\begin{theorem}
Let $f_n(x)$ for $n=0,1,2,\dots$ be functions of class $C^1$ defined on $0<x\leq 1$.
Then, a necessary and sufficient condition for $\{f_n(X_n)\}_{n=0}^\infty$ to be a martingale with respect to $\{X_n\}_{n=0}^\infty$ is that
\begin{equation}
  f_n(x) = f_0(0) + \int_0^x \frac{f_0'(t)}{(1-pt)^n} \, dt \quad (n=0,1,2,\dots).
  \label{eq:martingale_cond}
\end{equation}
\end{theorem}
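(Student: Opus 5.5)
The plan is to reduce the martingale property to a functional recursion linking $f_n$ and $f_{n-1}$, and then to solve that recursion by differentiation and induction. By the Markov property, $\{f_n(X_n)\}$ is a martingale if and only if $E[f_n(X_n)\mid X_{n-1}]=f_{n-1}(X_{n-1})$ almost surely for every $n\geq 1$. Integrability is harmless here: the state space is $(0,1]$ and the representation \eqref{eq:martingale_cond} gives bounded functions, provided $f_0'$ is integrable near $0$, which the formula implicitly requires. Using the transition rule (equivalently, the conditional law \eqref{eq:condi}), for $0<x\leq 1$ we get
\begin{align}
  E[f_n(X_n)\mid X_{n-1}=x]=(1-px)f_n(x)+p\int_0^x f_n(u)\,du .
\end{align}
So the condition is the family of identities
\begin{align}
  (1-px)f_n(x)+p\int_0^x f_n(u)\,du=f_{n-1}(x),\qquad n\geq 1 .
\end{align}
These must hold for all $x$ in the support of $X_{n-1}$. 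By the first theorem of Section 2, this support is all of $(0,1]$ once $n\geq 2$. For $n=1$, however, $X_0=1$, so only the point $x=1$ is forced.

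\textbf{Sufficiency.} I differentiate the identity in $x$. The left side has derivative $-pf_n(x)+(1-px)f_n'(x)+pf_n(x)=(1-px)f_n'(x)$. Hence the identity is equivalent to
\begin{align}
  f_n'(x)=\frac{f_{n-1}'(x)}{1-px}
\end{align}
together with agreement at $x\to 0^+$, namely $f_n(0^+)=f_{n-1}(0^+)$. Iterating gives $f_n'(x)=f_0'(x)/(1-px)^n$ with common limit $f_n(0^+)=f_0(0)$, which is exactly \eqref{eq:martingale_cond}. Conversely, if the $f_n$ are given by \eqref{eq:martingale_cond}, both sides of the identity have equal derivatives and equal limits at $0$. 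The identity therefore holds, and the martingale property follows.

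\textbf{Necessity.} Here the main obstacle is the support issue. For $n\geq 2$ the identity holds on all of $(0,1]$, and differentiating gives the recursion above. For $n=1$, only $x=1$ is constrained, so strictly speaking $f_0$ is determined only at the point $1$. I would handle this in one of two ways. The first is to interpret the statement as requiring the martingale property for every initial value $X_0=x\in(0,1]$, as is natural given the $G_n(x)$ and $W_n(x)$ conditioning of Section 3; then the $n=1$ identity holds everywhere. The second is to note that $f_0$ enters only through $f_0(1)$, and that the relation is then to be read with $f_0$ replaced by the function $\tilde f_0$ defined through $f_1$. I would adopt the first interpretation, state it explicitly, and remark on the second.

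With that interpretation, let $x\to 0^+$ in the identity. The integral term vanishes, provided $f_n$ is bounded near $0$, which I would assume via continuous extension to $[0,1]$. This gives $f_n(0)=f_{n-1}(0)$, hence $f_n(0)=f_0(0)$ for all $n$. Combined with $f_n'=f_0'/(1-px)^n$, integrating from $0$ to $x$ yields \eqref{eq:martingale_cond}.
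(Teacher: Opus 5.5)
Your proposal is correct and follows the paper's proof. You use the same one-step identity $(1-px)f_n(x)+p\int_0^x f_n(u)\,du=f_{n-1}(x)$, differentiate it to get $f_n'=f_{n-1}'/(1-px)$, and anchor with $f_n(0)=f_{n-1}(0)$; the paper checks sufficiency by integrating by parts rather than by matching derivatives and limits at $0$, which is equivalent. Your $n=1$ caveat is justified: since $X_0=1$, the martingale property fixes only $f_0(1)$, and the paper's necessity argument silently uses the identity for all $x\in(0,1]$ at $n=1$, i.e.\ your ``every initial value'' reading, without which necessity fails as literally stated.
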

\begin{proof}
Since
\begin{align}
   E[f_n(X_n) \mid X_{n-1} = x] 
   &=(1-px)f_n(x)+px\int_0^1 f_n(xu)du
   \\
   &=(1-px)f_n(x)+p\int_0^x f_n(y)dy,
 \end{align}
note that $E[f_n(X_n) \mid X_{n-1} = x] = f_{n-1}(x)$ holds if and only if
\begin{align}
    (1-px) f_n(x) + p \int_0^x f_n(y) \, dy = f_{n-1}(x) \label{eq:martingale_def}
\end{align}
holds.

First, we prove sufficiency.
Assume $f_n(x)$ is defined by Eq. \eqref{eq:martingale_cond}. Then,
\begin{align}
  (1-px) f_n(x) + p \int_0^x f_n(y) \, dy 
  &=  (1-px) f_n(x) + p \left( x f_n(x) - \int_0^x y f_n'(y) \, dy \right)  \\
    &= f_n(x) - \int_0^x p y f'_n(y)\,dy.
\end{align}
Here, substituting \eqref{eq:martingale_cond} rewritten as $f_n(x) = f_0(0) + \int_0^x f_n'(y) \, dy$ into the above equation gives
\begin{align}
    f_n(x) - \int_0^x p y f'_n(y)\,dy&= f_0(0) + \int_0^x f_n'(y) \, dy - \int_0^x p y f_n'(y) \, dy \nonumber \\
    &= f_0(0) + \int_0^x (1-py) f_n'(y) \, dy \nonumber \\
    &= f_0(0) + \int_0^x (1-py) \frac{f_0'(y)}{(1-py)^n} \, dy \nonumber \\
    &= f_0(0) + \int_0^x \frac{f_0'(y)}{(1-py)^{n-1}} \, dy.
\end{align}
This final expression is precisely the definition of $f_{n-1}(x)$. Therefore, it is shown that $\{f_n(X_n)\}_{n=0}^\infty$ is a martingale.

Next, we prove necessity.
Assume $\{f_n(X_n)\}_{n=0}^\infty$ is a martingale, that is, Eq. \eqref{eq:martingale_def} holds.
Since $f_n(x)$ is differentiable, differentiating both sides of Eq. \eqref{eq:martingale_def} with respect to $x$ gives
\begin{align}
    -p f_n(x) + (1-px) f_n'(x) + p f_n(x) &= f_{n-1}'(x) \nonumber \\
    (1-px) f_n'(x) &= f_{n-1}'(x) \nonumber \\
    f_n'(x) &= \frac{f_{n-1}'(x)}{1-px}.
\end{align}
By repeatedly applying this recurrence relation, we obtain the following equation:
\begin{align}
    f_n'(x) = \frac{f_0'(x)}{(1-px)^n}. \label{eq:derivative_rel}
\end{align}
Also, substituting $x=0$ into Eq. \eqref{eq:martingale_def} gives
$
    (1-0) f_n(0) + 0 = f_{n-1}(0),
$
that is, $ f_n(0) = f_{n-1}(0), $
which shows that $f_n(0) = f_0(0)$ for all $n$.
Finally, integrating Eq. \eqref{eq:derivative_rel} from $t=0$ to $x$ yields
\begin{align}
    \int_0^x f_n'(t) \, dt &= \int_0^x \frac{f_0'(t)}{(1-pt)^n} \, dt  \\
    % f_n(x) - f_n(0) &= \int_0^x \frac{f_0'(t)}{(1-pt)^n} \, dt  \\
    f_n(x) &= f_0(0) + \int_0^x \frac{f_0'(t)}{(1-pt)^n} \, dt.
\end{align}
From the above, it has been shown that $f_n(x)$ must be of this form to be a martingale. Combined with sufficiency, the proof is completed.
\end{proof}

For example, when
\begin{align}
    f_0(x) = -\frac{1}{p} \ln(1-px),
\end{align}
the initial value at $x=0$ is $f_0(0) = -\frac{1}{p} \ln(1) = 0$.
Also,
\begin{align}
    f_0'(x) = -\frac{1}{p} \cdot \frac{-p}{1-px} = \frac{1}{1-px}.
\end{align}
Therefore, for any $n \geq 1$,
\begin{align}
    f_n(x) = 0 + \int_0^x \frac{\frac{1}{1-pt}}{(1-pt)^n} \, dt 
    % = \int_0^x \frac{1}{(1-pt)^{n+1}} \, dt 
    = \int_0^x (1-pt)^{-(n+1)} \, dt.
\end{align}
Thus,
\begin{align}
    f_n(x) = \frac{1}{pn} \left[ (1-pt)^{-n} \right]_0^x 
    = \frac{1}{pn} \left( \frac{1}{(1-px)^n} - 1 \right),
\end{align}
which shows that
$$ f_0(X_0)= -\frac{1}{p} \ln(1-pX_0),\quad \{f_n(X_n)\}_{n=1}^\infty=\left\{\frac{1}{pn} \left( \frac{1}{(1-pX_n)^n} - 1 \right)\right\}_{n=1}^\infty$$
is a martingale.

\bibliography{ref}
\bibliographystyle{apalike} 

\end{document}